\newtheorem{theorem}{Theorem}[section]
\newtheorem{lemma}[theorem]{Lemma}
\begin{document}
\title[\textbf{Spectral properties of a Sturm-Liouville problem}]{\textbf{On
spectral properties of a Sturm-Liouville problem with transmission
conditions and eigenparameter dependent boundary conditions}}
\author{\textbf{Serkan Araci}}
\address{\textbf{Gaziantep University, Faculty of Science and Arts,
Department of Mathematics, 27310 Gaziantep Turkey}}
\email{\textbf{mtsrkn@hotmail.com}}
\author{\textbf{Mehmet Acikgoz}}
\address{\textbf{Gaziantep University, Faculty of Science and Arts,
Department of Mathematics, 27310 Gaziantep Turkey}}
\email{\textbf{acikgoz@gantep.edu.tr}}
\author{\textbf{Azad Bayramov}}
\address{\textbf{Department of Mathematics Education, Faculty of Education,
Recep Tayyip Erdogan University, Rize, Turkey.}}
\author{\textbf{Erdo\u{g}an \c{S}en}}
\address{\textbf{Namik Kemal University, Faculty of Arts and Science,
Department of Mathematics 59030 Tekirda\u{g}, Turkey}}
\email{\textbf{erdogan.math@gmail.com}}

\begin{abstract}
In this work, we consider not only a discontinuous boundary-value problem
with retarded argument and four supplementary transmission conditions at the
two points of discontinuities but also, eigenparameter-dependent boundary
conditions and obtain asymptotic formulas for the eigenvalues and
corresponding eigenfunctions.

\vspace{2mm}\noindent \textsc{2010 Mathematics Subject Classification.}
34L20, 35R10.

\vspace{2mm}

\noindent \textsc{Keywords and phrases.} Differential equation with retarded
argument; eigenparameter; transmission conditions; asymptotics of
eigenvalues and eigenfunctions.
\end{abstract}

\thanks{}
\maketitle




\section{\textbf{Introduction}}


In recent years the Sturm-Liouville problems with transmission conditions
and/or with eigenparameter-dependent boundary conditions have been an
important research topic in mathematical physics [1--12]. Boundary value
problems with discontinuity conditions inside the interval often appear in
applications. Such problems are connected with discontinuous material
properties, such as heat and mass transfer, varied assortment of physical
transfer problems, vibrating string problems when the string is loaded
additionally with point masses and diffraction problems. Sturm-Liouville
problems with eigenparameter-dependent boundary conditions are connected
with heat conduction problems and vibrating string problems and so on. It
must be noted that some problems with transmission conditions which arise in
mechanics (thermal condition problem for a thin laminated plate) were
studied in \cite{Ti}.

In this paper we investigated the eigenvalues and eigenfunctions of a
discontinuous boundary value problem with retarded argument. Namely, we
consider the boundary value problem for the differential equation%
\begin{equation}
y^{\prime \prime }(x)+q(x)y(x-\Delta (x))+\mu ^{2}y(x)=0  \label{equation 1}
\end{equation}%
on $\left[ 0,h_{1}\right) \cup \left( h_{1},h_{2}\right) \cup \left(
h_{2},\pi \right] $, with boundary conditions

\begin{equation}
\mu y(0)+y^{\prime }(0)=0\text{,}  \label{equation 2}
\end{equation}%
\begin{equation}
\mu ^{2}y(\pi )+y^{\prime }(\pi )=0\text{,}  \label{equation 3}
\end{equation}%
and transmission conditions%
\begin{equation}
y(h_{1}-0)-\delta y(h_{1}+0)=0\text{,}  \label{equation 4}
\end{equation}%
\begin{equation}
y^{\prime }(h_{1}-0)-\delta y^{\prime }(h_{1}+0)=0\text{,}
\label{equation 5}
\end{equation}%
\begin{equation}
y(h_{2}-0)-\theta y(h_{2}+0)=0\text{,}  \label{equation 6}
\end{equation}%
\begin{equation}
y^{\prime }(h_{2}-0)-\theta y^{\prime }(h_{2}+0)=0\text{,}
\label{equation 7}
\end{equation}%
where the real-valued function $q(x)$ is continuous in $\left[
0,h_{1}\right) \cup \left( h_{1},h_{2}\right) \cup \left( h_{2},\pi \right]
~ $and has finite limits 
\begin{equation*}
q(h_{1}\pm 0)=\lim_{x\rightarrow h_{1}\pm 0}q(x),\text{ }q(h_{2}\pm
0)=\lim_{x\rightarrow h_{2}\pm 0}q(x)\text{,}
\end{equation*}%
the real valued function $\Delta (x)\geq 0$ continuous in $\left[
0,h_{1}\right) \cup \left( h_{1},h_{2}\right) \cup \left( h_{2},\pi \right] $
and has finite limits%
\begin{equation*}
\Delta (h_{1}\pm 0)=\lim_{x\rightarrow h_{1}\pm 0}\Delta (x),\text{ }\Delta
(h_{2}\pm 0)=\lim_{x\rightarrow h_{2}\pm 0}\Delta (x);
\end{equation*}%
$x-\Delta (x)\geq 0$ \textit{if}$\ \ x\in \left[ 0,h_{1}\right) $; $x-\Delta
(x)\geq h_{1}$, \textit{if} $x\in \left( h_{1},h_{2}\right) $; $x-\Delta
(x)\geq h_{2}$, \textit{if} $x\in \left( h_{2},\pi \right) $; $\mu $ is a
real positive eigenparameter; $h_{1},h_{2},\delta ,\theta \neq 0$ are
arbitrary real numbers such that $0<h_{1}<h_{2}<\pi $ and $a_{2}\neq 0$.

Let $w_{1}(x,\lambda )$ be a solution of Eq. (\ref{equation 1}) on $\left[
0,h_{1}\right] ,$ satisfying the initial conditions%
\begin{equation}
w_{1}\left( 0,\mu \right) =1\text{ and }w_{1}^{\prime }\left( 0,\mu \right)
=-\mu \text{.}  \label{equation 8}
\end{equation}%
The conditions (\ref{equation 8}) define a unique solution of Eq. (\ref%
{equation 1}) on $\left[ 0,h_{1}\right] $ (\cite{Norkin 2}, p. 12).

After defining the above solution, then we shall define the solution $%
w_{2}\left( x,\mu \right) $ of Eq. (\ref{equation 1}) on $\left[ h_{1},h_{2}%
\right] $ by means of the solution $w_{1}\left( x,\mu \right) $ using the
initial conditions%
\begin{equation}
w_{2}\left( h_{1},\mu \right) =\delta ^{-1}w_{1}\left( h_{1},\mu \right) 
\text{ and}\quad w_{2}^{\prime }(h_{1},\>\mu )=\delta ^{-1}w_{1}^{\prime
}(h_{1},\>\mu )\text{.}  \label{equation 9}
\end{equation}%
The conditions (\ref{equation 9}) define a unique solution of Eq. (\ref%
{equation 1}) on $\left[ h_{1},h_{2}\right] .$

After describing the above solution, then we shall give the solution $%
w_{3}\left( x,\mu \right) $ of Eq. (\ref{equation 1}) on $\left[ h_{2},\pi %
\right] $ by means of the solution $w_{2}\left( x,\mu \right) $ using the
initial conditions%
\begin{equation}
w_{3}\left( h_{2},\mu \right) =\theta ^{-1}w_{2}\left( h_{2},\mu \right) 
\text{ \textit{and}}\quad w_{3}^{\prime }(h_{2},\mu )=\theta
^{-1}w_{2}^{\prime }(h_{2},\mu )\text{.}  \label{equation 10}
\end{equation}%
The conditions (\ref{equation 10}) define a unique solution of Eq. (\ref%
{equation 1}) on $\left[ h_{2},\pi \right] .$

Consequently, the function $w\left( x,\mu \right) $ is defined on $\left[
0,h_{1}\right) \cup \left( h_{1},h_{2}\right) \cup \left( h_{2},\pi \right] $
by the equality%
\begin{equation*}
w(x,\lambda )=\left\{ 
\begin{array}{cc}
w_{1}(x,\mu ), & x\in \lbrack 0,h_{1}), \\ 
w_{2}(x,\mu ), & x\in \left( h_{1},h_{2}\right) , \\ 
w_{3}(x,\mu ), & x\in \left( h_{2},\pi \right]%
\end{array}%
\right.
\end{equation*}%
is a solution of the Eq. (\ref{equation 1}) on $\left[ 0,h_{1}\right) \cup
\left( h_{1},h_{2}\right) \cup \left( h_{2},\pi \right] $; which satisfies
one of the boundary conditions and four transmission conditions.

\begin{lemma}
Let $w\left( x,\mu \right) $ be a solution of Eq. (\ref{equation 1})$.$ Then
the following integral equations hold:\ 
\begin{equation}
w_{1}(x,\mu )=-\sin \mu x+\cos \mu x-\frac{1}{\mu }\int\limits_{0}^{{x}%
}q\left( \tau \right) \sin \left( \mu \left( x-\tau \right) \right)
w_{1}\left( \tau -\Delta \left( \tau \right) ,\mu \right) d\tau \text{,}
\label{equation 11}
\end{equation}%
\begin{align}
w_{2}(x,\mu )& =\frac{1}{\delta }w_{1}\left( h_{1},\mu \right) \cos \mu
\left( x-h_{1}\right) +\frac{w_{1}^{\prime }\left( h_{1},\lambda \right) }{%
\mu \delta }\sin \mu \left( x-h_{1}\right)  \notag \\
& -\frac{1}{\mu }\int\limits_{h_{1}}^{{x}}q\left( \tau \right) \sin s\left(
x-\tau \right) w_{2}\left( \tau -\Delta \left( \tau \right) ,\mu \right)
d\tau \text{,}  \label{equation 12}
\end{align}%
\begin{align}
w_{3}(x,\mu )& =\frac{1}{\theta }w_{2}\left( h_{2},\mu \right) \cos \mu
\left( x-h_{2}\right) +\frac{w_{2}^{\prime }\left( h_{2},\mu \right) }{\mu
\theta }\sin \mu \left( x-h_{2}\right)  \notag \\
& -\frac{1}{\mu }\int\limits_{h_{2}}^{{x}}q\left( \tau \right) \sin \mu
\left( x-\tau \right) w_{3}\left( \tau -\Delta \left( \tau \right) ,\mu
\right) d\tau \text{.}  \label{equation 13}
\end{align}
\end{lemma}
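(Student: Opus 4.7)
The plan is to treat each subinterval separately and, on each piece, regard the delay equation as an inhomogeneous second-order linear ODE with constant-coefficient principal part, whose inhomogeneity is $-q(x)\,w(x-\Delta(x),\mu)$. Because of the standing hypothesis $x-\Delta(x)\geq 0$ on $[0,h_{1})$, $\geq h_{1}$ on $(h_{1},h_{2})$, and $\geq h_{2}$ on $(h_{2},\pi)$, the retarded argument on each piece never leaves a region where the solution has already been constructed, so the inhomogeneity is a bona-fide continuous function of $\tau$ on the integration interval and the classical variation-of-parameters formula is directly applicable.

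First I would record the Duhamel-type identity for the auxiliary Cauchy problem
\begin{equation*}
z''(x)+\mu^{2}z(x)=f(x),\qquad z(x_{0})=\alpha,\quad z'(x_{0})=\beta,
\end{equation*}
whose unique solution is
\begin{equation*}
z(x)=\alpha\cos\mu(x-x_{0})+\frac{\beta}{\mu}\sin\mu(x-x_{0})+\frac{1}{\mu}\int_{x_{0}}^{x}\sin\mu(x-\tau)\,f(\tau)\,d\tau.
\end{equation*}
This comes from variation of parameters with the fundamental system $\{\cos\mu(x-x_{0}),\,\mu^{-1}\sin\mu(x-x_{0))}\}$ of Wronskian $1$; the formula is verified by differentiating twice and checking the initial data at $x=x_{0}$.

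Then I would apply this identity three times. For $w_{1}$, take $x_{0}=0$, $\alpha=1$, $\beta=-\mu$ from (\ref{equation 8}) and $f(\tau)=-q(\tau)w_{1}(\tau-\Delta(\tau),\mu)$; after using $\cos\mu x+(-\mu/\mu)\sin\mu x=\cos\mu x-\sin\mu x$ this reproduces (\ref{equation 11}). For $w_{2}$, take $x_{0}=h_{1}$ with the matching values (\ref{equation 9}) and $f(\tau)=-q(\tau)w_{2}(\tau-\Delta(\tau),\mu)$ to get (\ref{equation 12}); for $w_{3}$, take $x_{0}=h_{2}$ with (\ref{equation 10}) and the corresponding inhomogeneity to get (\ref{equation 13}).

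I do not anticipate a genuine obstacle: existence and uniqueness of each $w_{i}$ was already invoked from Norkin, and the only point meriting a line of justification is that $\tau-\Delta(\tau)$ belongs to the subinterval on which $w$ is already defined, so the kernel $q(\tau)w_{i}(\tau-\Delta(\tau),\mu)$ is continuous and the integrals converge. The three integral equations can thus be read equivalently as a characterization of those already unique solutions via variation of parameters pieced together across the transmission points.
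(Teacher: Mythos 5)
Your proposal is correct and is essentially the paper's own argument run in the opposite direction: the paper verifies the three identities by substituting $-\mu ^{2}w_{i}(\tau ,\mu )-w_{i}^{\prime \prime }(\tau ,\mu )$ for $-q(\tau )w_{i}(\tau -\Delta (\tau ),\mu )$ in the integrals and integrating by parts twice, which is exactly the computation underlying your variation-of-parameters (Duhamel) derivation with the delay term treated as a known inhomogeneity and the initial data (\ref{equation 8})--(\ref{equation 10}) supplying the boundary terms. Your added observation that $\tau -\Delta (\tau )$ remains in the subinterval where $w_{i}$ is already defined, so the forcing is continuous and the integrals make sense, is the right point to note and matches the paper's standing hypotheses.
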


\begin{proof}
To prove this lemma, it is enough to substitute $\>-s^{2}w_{1}(\tau ,\mu
)-w_{1}^{\prime \prime }(\tau ,\mu ),\>-s^{2}w_{2}(\tau ,\mu )-w_{2}^{\prime
\prime }(\tau ,\mu )$ and $\>-s^{2}w_{3}(\tau ,\mu )-w_{3}^{\prime \prime
}(\tau ,\mu )\>$ instead of $\>-q(\tau )w_{1}(\tau -\Delta (\tau ),\mu
),\>-q(\tau )w_{2}(\tau -\Delta (\tau ),\mu )$ and $\>-q(\tau )w_{3}(\tau
-\Delta (\tau ),\mu )\>$ in the integrals in (\ref{equation 11}), (\ref%
{equation 12}) and (\ref{equation 13}) respectively and integrate by parts
twice.
\end{proof}

\begin{theorem}
The problem (\ref{equation 1})-(\ref{equation 7}) can have only simple
eigenvalues.
\end{theorem}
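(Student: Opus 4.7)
The plan is to prove simplicity by showing that every eigenfunction for a fixed eigenvalue $\mu_{0}$ is a scalar multiple of the piecewise reference solution $w(\cdot,\mu_{0})$ constructed just before Lemma~1, so that the corresponding eigenspace has dimension one. A first remark is that the classical Sturm--Liouville approach---apply Wronskian constancy to two eigenfunctions $u_{1},u_{2}$ at the same eigenvalue---is unavailable here: a direct computation gives
\begin{equation*}
\bigl(u_{1}u_{2}'-u_{1}'u_{2}\bigr)'=q(x)\bigl[u_{1}(x-\Delta(x))\,u_{2}(x)-u_{1}(x)\,u_{2}(x-\Delta(x))\bigr],
\end{equation*}
which need not vanish because of the retarded argument. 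I would therefore replace the Wronskian step by the uniqueness of the initial value problem for Eq.~(\ref{equation 1}) on each subinterval, the very fact the paper already invokes (from Norkin) in defining $w_{1},w_{2},w_{3}$.

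Concretely, let $\mu_{0}$ be an eigenvalue and $u$ a corresponding eigenfunction, and set $c:=u(0)$. The boundary condition (\ref{equation 2}) forces $u'(0)=-\mu_{0}c$, so $\bigl(u(0),u'(0)\bigr)=c\bigl(w_{1}(0,\mu_{0}),w_{1}'(0,\mu_{0})\bigr)$. By linearity of Eq.~(\ref{equation 1}) and uniqueness of the IVP on $[0,h_{1}]$, we get $u\equiv c\,w_{1}(\cdot,\mu_{0})$ on $[0,h_{1}]$. The transmission conditions (\ref{equation 4})--(\ref{equation 5}) together with $\delta\neq 0$ then give $u(h_{1}+0)=\delta^{-1}u(h_{1}-0)=c\,w_{2}(h_{1},\mu_{0})$ and analogously $u'(h_{1}+0)=c\,w_{2}'(h_{1},\mu_{0})$, so uniqueness on $[h_{1},h_{2}]$ yields $u\equiv c\,w_{2}(\cdot,\mu_{0})$ there. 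Applying the same step at $h_{2}$ via (\ref{equation 6})--(\ref{equation 7}) and $\theta\neq 0$ propagates the identity to $u\equiv c\,w_{3}(\cdot,\mu_{0})$ on $[h_{2},\pi]$. Assembling the three pieces yields $u=c\,w(\cdot,\mu_{0})$ on the whole domain, and $u\not\equiv 0$ forces $c\neq 0$ (otherwise the same uniqueness chain drives $u$ to zero everywhere). Hence the eigenspace is exactly one-dimensional, i.e.\ $\mu_{0}$ is simple.

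The main obstacle is the retarded term $q(x)y(x-\Delta(x))$, which, as noted, blocks the classical Wronskian route. The resolution rests on two structural hypotheses already imposed in the set-up: the inequalities on $x-\Delta(x)$ on each of the three subintervals ensure that the retarded IVP is well-posed there (two solutions with identical Cauchy data coincide), and the non-vanishing of $\delta$ and $\theta$ lets the transmission conditions (\ref{equation 4})--(\ref{equation 7}) be inverted, so the single scalar $c$ propagates cleanly through $h_{1}$ and $h_{2}$. The eigenparameter-dependent boundary condition (\ref{equation 3}) at $\pi$ plays no role in the simplicity proof; it only selects which values of $\mu_{0}$ are eigenvalues.
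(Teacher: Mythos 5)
Your proof is correct, and it is a cleaner variant of the paper's argument rather than a literal match. The paper first invokes Norkin's Theorem 2.2.2 (a vanishing Wronskian at the initial point implies linear dependence for the retarded equation) to get $\widetilde{y}_{i}=K_{i}w_{i}$ separately on each of the three subintervals, with three a priori unrelated constants, and then runs a proof by contradiction: if $K_{2}\neq K_{3}$ the transmission conditions force $w_{3}(h_{2},\widetilde{\mu})=w_{3}'(h_{2},\widetilde{\mu})=0$, hence $w\equiv 0$ by backward propagation, contradicting the normalization (\ref{equation 8}). You instead propagate a single constant $c$ forward from $x=0$: condition (\ref{equation 2}) fixes the Cauchy data at $0$ up to the scalar $c$, IVP uniqueness (legitimate because $x-\Delta(x)$ stays in the current subinterval, the same fact the paper cites from Norkin, p.~12) gives $u=c\,w_{1}$ on $[0,h_{1}]$, and the invertible transmission conditions (\ref{equation 4})--(\ref{equation 7}) transfer the Cauchy data, still with the same $c$, across $h_{1}$ and $h_{2}$. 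This buys you two things: you never need the unproved assertion in the paper that $\widetilde{y}_{2},w_{2}$ and $\widetilde{y}_{3},w_{3}$ are linearly dependent on the interior intervals (it is true, but the paper only says ``we can also prove''), and you avoid the contradiction step entirely, since the constants never get a chance to differ. Your preliminary remark that the classical constancy-of-the-Wronskian argument fails because of the retarded term is also accurate and explains why the underlying tool must be uniqueness for the retarded IVP (equivalently Norkin's dependence criterion) rather than the standard Sturm--Liouville identity. Both proofs establish the same conclusion, namely that the eigenspace of each eigenvalue is one-dimensional.
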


\begin{proof}
Let $\widetilde{\mu }$ be an eigenvalue of the problem (\ref{equation 1})-(%
\ref{equation 7}) and%
\begin{equation*}
\widetilde{y}(x,\widetilde{\mu })=\left\{ 
\begin{array}{cc}
\widetilde{y}_{1}(x,\widetilde{\mu }), & x\in \lbrack 0,h_{1}), \\ 
\widetilde{y}_{2}(x,\widetilde{\mu }), & x\in \left( h_{1},h_{2}\right) , \\ 
\widetilde{y}_{3}(x,\widetilde{\mu }), & x\in \left( h_{2},\pi \right]%
\end{array}%
\right.
\end{equation*}%
be a corresponding eigenfunction. Then, from (\ref{equation 2}) and (\ref%
{equation 8}), it follows that the determinant%
\begin{equation*}
W\left[ \widetilde{y}_{1}(0,\widetilde{\mu }),w_{1}(0,\widetilde{\mu })%
\right] =\left\vert 
\begin{array}{c}
\widetilde{y}_{1}(0,\widetilde{\mu })\text{ \ \ \ \ \ \ \ \ \ }1 \\ 
\widetilde{y}_{1}^{\prime }(0,\widetilde{\mu })\text{ \ \ \ \ \ \ }-%
\widetilde{\mu }%
\end{array}%
\right\vert =0\text{,}
\end{equation*}%
and, by Theorem 2.2.2, in \cite{Norkin 2} the functions $\widetilde{y}_{1}(x,%
\widetilde{\mu })$ and $w_{1}(x,\widetilde{\mu })$ are linearly dependent on 
$\left[ 0,h_{1}\right] $. We can also prove that the functions $\widetilde{y}%
_{2}(x,\widetilde{\mu })$ and $w_{2}(x,\widetilde{\mu })$ are linearly
dependent on $\left[ h_{1},h_{2}\right] $ and $\widetilde{y}_{3}(x,%
\widetilde{\mu })$ and $w_{3}(x,\widetilde{\mu })$ are linearly dependent on 
$\left[ h_{2},\pi \right] $. Hence%
\begin{equation}
\widetilde{y}_{i}(x,\widetilde{\mu })=K_{i}w_{i}(x,\widetilde{\mu })\text{ \
\ \ }\left( i=1,2,3\right)  \label{equation 14}
\end{equation}%
for some $K_{1}\neq 0,$ $K_{2}\neq 0$ and $K_{3}\neq 0$. We must show that $%
K_{1}=K_{2}$ and $K_{2}=K_{3}$. Suppose that $K_{2}\neq K_{3}$. From the
equalities (\ref{equation 6}) and (\ref{equation 14}), we have%
\begin{align*}
\widetilde{y}(h_{2}-0,\widetilde{\mu })-\theta \widetilde{y}(h_{2}+0,%
\widetilde{\mu })& =\widetilde{y_{2}}(h_{2},\widetilde{\mu })-\theta 
\widetilde{y_{3}}(h_{2},\widetilde{\mu }) \\
& =K_{2}w_{2}(h_{2},\widetilde{\mu })-\theta K_{3}w_{3}(h_{2},\widetilde{\mu 
}) \\
& =K_{2}\theta w_{3}(h_{2},\widetilde{\mu })-K_{3}\theta w_{3}(h_{2},%
\widetilde{\mu }) \\
& =\theta \left( K_{2}-K_{3}\right) w_{3}(h_{2},\widetilde{\mu })=0\text{.}
\end{align*}%
Since $\theta \left( K_{2}-K_{3}\right) \neq 0$ it follows that 
\begin{equation}
w_{3}\left( h_{2},\widetilde{\mu }\right) =0\text{.}  \label{equation 15}
\end{equation}%
By the same procedure from equality (\ref{equation 7}) we can derive that%
\begin{equation}
w_{3}^{\prime }\left( h_{2},\widetilde{\mu }\right) =0\text{.}
\label{equation 16}
\end{equation}%
From the fact that $w_{2}(x,\widetilde{\mu })$ is a solution of the
differential Eq. (\ref{equation 1}) on $\left[ h_{2},\pi \right] $ and
satisfies the initial conditions (\ref{equation 15}) and (\ref{equation 16})
it follows that $w_{3}(x,\widetilde{\mu })=0$ identically on $\left[
h_{2},\pi \right] $.

By using this method, we may also find%
\begin{align*}
w_{2}\left( h_{2},\widetilde{\mu }\right) & =w_{2}^{\prime }\left( h_{2},%
\widetilde{\mu }\right) =0\text{,} \\
w_{1}\left( h_{1},\widetilde{\mu }\right) & =w_{1}^{\prime }\left( h_{1},%
\widetilde{\mu }\right) =0\text{.}
\end{align*}%
From the latter discussions of $w_{3}(x,\widetilde{\mu })$ it follows that $%
w_{2}(x,\widetilde{\mu })=0$ and $w_{1}(x,\widetilde{\mu })=0$ identically
on $\left( h_{1},h_{2}\right) $ and $\left[ 0,h_{1}\right) .$ But this
contradicts (\ref{equation 8}), thus completing the proof.
\end{proof}

\section{\textbf{Existence theorem}}

The function $w(x,\>\mu )\>$ is defined in section $1$ is a nontrivial
solution of Eq. (\ref{equation 1}) satisfying conditions (\ref{equation 2})
and (\ref{equation 4})-(\ref{equation 7}). Putting$\>$ $w(x,\>\mu )\>$ into (%
\ref{equation 3}), we get the characteristic equation 
\begin{equation}
H(\lambda )\equiv w^{\prime }(\pi ,\>\mu )+\mu ^{2}w(\pi ,\>\mu )=0\text{.}
\label{equation 17}
\end{equation}

By Theorem 1 the set of eigenvalues of boundary-value problem (\ref{equation
1})-(\ref{equation 7}) coincides with the set of real roots of Eq. (\ref%
{equation 17}). Let 
\begin{equation*}
\>q_{1}=\int\limits_{0}^{h_{1}}|q(\tau )|d\tau
,q_{2}=\int\limits_{h_{1}}^{h_{2}}\left\vert q(\tau )\right\vert d\tau \text{
and }q_{3}=\int\limits_{h_{2}}^{{\pi }}\left\vert q(\tau )\right\vert d\tau 
\text{.}
\end{equation*}

\begin{lemma}
(1) Let $\mu \geq 2q_{1}$. Then for the solution $w_{1}\left( x,\mu \right) $
of Eq. (\ref{equation 11}), the following inequality holds$:$%
\begin{equation}
\left\vert w_{1}\left( x,\mu \right) \right\vert \leq 2\sqrt{2}\text{, \ \ }%
x\in \left[ 0,h_{1}\right] \text{.}  \label{equation 18}
\end{equation}%
(2) Let $\mu \geq \max \left\{ 2q_{1},2q_{2}\right\} $. Then for the
solution $w_{2}\left( x,\mu \right) $ of Eq. (\ref{equation 12}), the
following inequality holds$:$%
\begin{equation}
\left\vert w_{2}\left( x,\mu \right) \right\vert \leq \frac{8\sqrt{2}}{%
\left\vert \delta \right\vert }\text{,\ \ }x\in \left[ h_{1},h_{2}\right] 
\text{.}  \label{equation 19}
\end{equation}%
(3) Let $\mu \geq \max \left\{ 2q_{1},2q_{2},2q_{3}\right\} $. Then for the
solution $w_{3}\left( x,\mu \right) $ of Eq. (\ref{equation 13}), the
following inequality holds$:$%
\begin{equation}
\left\vert w_{3}\left( x,\mu \right) \right\vert \leq \frac{32\sqrt{2}}{%
\left\vert \delta \theta \right\vert }\text{,\ \ }x\in \left[ h_{2},\pi %
\right] \text{.}  \label{equation 20}
\end{equation}
\end{lemma}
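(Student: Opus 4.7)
\smallskip

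The plan is to prove all three inequalities by a standard Gronwall-type bootstrap on the respective integral equations \eqref{equation 11}, \eqref{equation 12}, \eqref{equation 13}, feeding the bound from one interval into the next via the transmission conditions. The key bookkeeping is to estimate both $w_i$ and its derivative at the matching points $h_1$ and $h_2$ so that the constants multiplying the sine/cosine in the next integral equation stay under control.

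For (1), I would let $M_1(\mu)=\max_{x\in[0,h_1]}|w_1(x,\mu)|$. From \eqref{equation 11}, the free term $-\sin\mu x+\cos\mu x$ is bounded in absolute value by $\sqrt{2}$, while the integral term is bounded by $M_1(\mu)\,q_1/\mu$. Thus
\begin{equation*}
M_1(\mu)\le \sqrt{2}+\frac{q_1}{\mu}M_1(\mu),
\end{equation*}
and the hypothesis $\mu\ge 2q_1$ gives $q_1/\mu\le 1/2$, hence $M_1(\mu)\le 2\sqrt{2}$, which is \eqref{equation 18}. For the inductive step into (2) I will also need $|w_1'(h_1,\mu)|/\mu$, obtained by differentiating \eqref{equation 11}:
\begin{equation*}
w_1'(x,\mu)=-\mu\cos\mu x-\mu\sin\mu x-\int_0^x q(\tau)\cos(\mu(x-\tau))w_1(\tau-\Delta(\tau),\mu)\,d\tau,
\end{equation*}
from which $|w_1'(h_1,\mu)|\le \sqrt{2}\,\mu+2\sqrt{2}\,q_1\le 2\sqrt{2}\,\mu$.

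For (2), plug the just-obtained bounds $|w_1(h_1,\mu)|\le 2\sqrt{2}$ and $|w_1'(h_1,\mu)|/\mu\le 2\sqrt{2}$ into \eqref{equation 12}. The first two (non-integral) terms are then majorized by $4\sqrt{2}/|\delta|$, and the integral contributes at most $M_2(\mu)\,q_2/\mu$ where $M_2(\mu)=\max_{[h_1,h_2]}|w_2(x,\mu)|$. Since $\mu\ge 2q_2$, the same bootstrap as before gives $M_2(\mu)\le 8\sqrt{2}/|\delta|$, which is \eqref{equation 19}. Then I differentiate \eqref{equation 12} and, using \eqref{equation 19} in the integral term and the same bounds at $h_1$ in the boundary terms, I obtain $|w_2'(h_2,\mu)|/\mu\le 8\sqrt{2}/|\delta|$ provided $\mu\ge 2q_2$.

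Part (3) is then a verbatim repetition of the argument for part (2): the first two terms of \eqref{equation 13} are controlled by $16\sqrt{2}/|\delta\theta|$, the integral term by $M_3(\mu)\,q_3/\mu\le M_3(\mu)/2$, and the bootstrap gives \eqref{equation 20}. There is no real obstacle here; the only subtlety is the discipline of propagating a pair of bounds (on $w_i$ and on $w_i'/\mu$) through each matching point, together with making sure the smallness assumption on $q_i/\mu$ beats the constant $1$ in the Gronwall step, which is precisely why the hypotheses $\mu\ge 2q_i$ appear.
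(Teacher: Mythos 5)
Your proposal is correct and follows essentially the same route as the paper: the same max-norm bootstrap $M_i\le C_i+\frac{q_i}{\mu}M_i$ on each integral equation, with the bounds on $w_i$ and $w_i'/\mu$ propagated through the matching points $h_1$ and $h_2$ to control the non-integral terms, yielding exactly the constants $2\sqrt{2}$, $8\sqrt{2}/|\delta|$, $32\sqrt{2}/|\delta\theta|$.
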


\begin{proof}
Let $B_{1\mu }=\max_{\left[ 0,h_{1}\right] }\left\vert w_{1}\left( x,\mu
\right) \right\vert $. Then from (\ref{equation 11}), it follows that, for
every $\mu $, the following inequality holds:%
\begin{equation*}
B_{1\mu }\leq \sqrt{2}+\frac{1}{\mu }B_{1\mu }q_{1}\text{.}
\end{equation*}%
If $\mu \geq 2q_{1}$ we get (\ref{equation 18}). Differentiating (\ref%
{equation 11}) with respect to $x$, we have%
\begin{equation}
w_{1}^{\prime }(x,\mu )=-\mu \sin \mu x-\mu \cos \mu
x-\int\limits_{0}^{x}q(\tau )\cos \mu \left( x-\tau \right) w_{1}(\tau
-\Delta \left( \tau \right) ,\mu )d\tau \text{.}  \label{equation 21}
\end{equation}%
From expressions of (\ref{equation 21}) and (\ref{equation 18}), it follows
that, for $\mu \geq 2q_{1}$, the following inequality holds:%
\begin{equation}
\frac{\left\vert w_{1}^{\prime }(x,\mu )\right\vert }{\left\vert \mu
\right\vert }\leq 2\sqrt{2}\text{.}  \label{equation 22}
\end{equation}%
Let $B_{2\mu }=\max_{\left[ h_{1},h_{2}\right] }\left\vert w_{2}\left( x,\mu
\right) \right\vert $. Then from (\ref{equation 9}), (\ref{equation 18}) and
(\ref{equation 22}) it follows that, for $\mu \geq 2q_{1}$ and $\mu \geq
2q_{2}$, the following inequality holds:%
\begin{equation*}
B_{2\mu }\leq \frac{4\sqrt{2}}{\left\vert \delta \right\vert }+\frac{1}{\mu }%
B_{2\mu }q_{2}\text{.}
\end{equation*}%
Hence, if $\mu \geq \max \left\{ 2q_{1},2q_{2}\right\} $, it reduces to (\ref%
{equation 19}). Differentiating (\ref{equation 12}) with respect to $\>x$,
we get%
\begin{gather}
w_{2}^{\prime }(x,\mu )=-\frac{\mu }{\delta }w_{1}\left( h_{1},\mu \right)
\sin \mu \left( x-h_{1}\right) +\frac{w_{1}^{\prime }\left( h_{1},\mu
\right) }{\delta }\cos \mu \left( x-h_{1}\right)  \notag \\
-\int\limits_{h_{1}}^{{x}}q(\tau )\cos \mu \left( x-\tau \right) w_{2}(\tau
-\Delta \left( \tau \right) ,\mu )d\tau \text{.}  \label{equation 23}
\end{gather}%
From (\ref{equation 18}) and (\ref{equation 23}), it follows that, for $\mu
\geq 2q_{1}$ and $\mu \geq 2q_{2}$, the following inequality holds:%
\begin{equation}
\frac{\left\vert w_{2}^{\prime }(x,\mu )\right\vert }{\mu }\leq \frac{8\sqrt{%
2}}{\left\vert \delta \right\vert }\text{.}  \label{equation 24}
\end{equation}%
Let $B_{3\mu }=\max_{\left[ h_{2},\pi \right] }\left\vert w_{3}\left( x,\mu
\right) \right\vert $. Then from (\ref{equation 13}), (\ref{equation 19})
and (\ref{equation 24}) it follows that, for $\mu \geq 2q_{1},$ $\mu \geq
2q_{2}$ and $\mu \geq 2q_{3}$, the following inequality holds:%
\begin{equation*}
B_{3\mu }\leq \frac{16\sqrt{2}}{\left\vert \delta \theta \right\vert }+\frac{%
1}{\mu }B_{3\mu }q_{3}\text{.}
\end{equation*}%
Hence, if $\mu \geq \max \left\{ 2q_{1},2q_{2},2q_{3}\right\} $ we obtain (%
\ref{equation 20}).
\end{proof}

\begin{theorem}
The problem (\ref{equation 1})-(\ref{equation 7}) has infinitely number of
positive eigenvalues.
\end{theorem}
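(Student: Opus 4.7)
The plan is to reduce the theorem to counting sign changes (or zeros) of an explicit asymptotic expression for the characteristic function $H(\mu)$ defined in (\ref{equation 17}). Since by Theorem 1.2 the eigenvalues are exactly the real roots of $H(\mu)=0$, it suffices to produce a sequence $\mu_n\to\infty$ on which $H(\mu_n)$ changes sign infinitely often.

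First, I would use Lemma 2.2 to insert the uniform bounds (\ref{equation 18})--(\ref{equation 20}) back into the integral equations (\ref{equation 11})--(\ref{equation 13}). Because $|\sin\mu(x-\tau)|\le 1$ and $q$ is integrable on each subinterval, the integral terms in (\ref{equation 11}) and in the differentiated relation (\ref{equation 21}) are of size $O(1)$ (respectively $O(\mu)$ after differentiation, but with a factor $1/\mu$ absorbed), so for $\mu \ge 2q_1$ one obtains
\begin{equation*}
w_1(x,\mu)=\cos\mu x-\sin\mu x+O(\mu^{-1}),\qquad w_1'(x,\mu)=-\mu(\sin\mu x+\cos\mu x)+O(1),
\end{equation*}
uniformly on $[0,h_1]$. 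Substituting these into (\ref{equation 9}) and then into (\ref{equation 12}), (\ref{equation 23}), applying the usual sum-to-angle identities, gives for $\mu\ge\max\{2q_1,2q_2\}$
\begin{equation*}
w_2(x,\mu)=\delta^{-1}(\cos\mu x-\sin\mu x)+O(\mu^{-1}),\qquad w_2'(x,\mu)=-\delta^{-1}\mu(\sin\mu x+\cos\mu x)+O(1).
\end{equation*}
Repeating the same procedure at $h_2$ with (\ref{equation 10}) and (\ref{equation 13}) yields, for $\mu\ge\max\{2q_1,2q_2,2q_3\}$,
\begin{equation*}
w_3(x,\mu)=(\delta\theta)^{-1}(\cos\mu x-\sin\mu x)+O(\mu^{-1}),\qquad w_3'(x,\mu)=-(\delta\theta)^{-1}\mu(\sin\mu x+\cos\mu x)+O(1),
\end{equation*}
uniformly on $[h_2,\pi]$.

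Plugging $x=\pi$ into (\ref{equation 17}) and collecting powers of $\mu$ produces the asymptotic expansion
\begin{equation*}
H(\mu)=\frac{\mu^{2}}{\delta\theta}\bigl(\cos\mu\pi-\sin\mu\pi\bigr)+O(\mu).
\end{equation*}
The principal term $\mu^{2}(\delta\theta)^{-1}(\cos\mu\pi-\sin\mu\pi)$ vanishes and changes sign at the arithmetic sequence $\mu_n=n+\tfrac14$, $n\in\mathbb{N}$, where it jumps between values of modulus $\sqrt{2}\,\mu_n^{2}/|\delta\theta|$. A standard argument then selects points $\mu_n^\pm$ slightly to either side of each $\mu_n$ at which the principal term has opposite signs and modulus of order $\mu_n^{2}$, so that for all sufficiently large $n$ the $O(\mu)$ remainder cannot destroy the sign change. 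By the intermediate value theorem $H$ has a real positive zero between $\mu_n^-$ and $\mu_n^+$, and these zeros accumulate at $+\infty$, proving the existence of infinitely many positive eigenvalues.

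The routine part is the telescoping application of Lemma 2.2 to turn the integral equations into asymptotic formulas; the one genuinely delicate step is controlling the error $O(\mu)$ against the principal term $O(\mu^{2})$ \emph{uniformly near the zeros of $\cos\mu\pi-\sin\mu\pi$}, which is why one picks the perturbed points $\mu_n^\pm$ a fixed small distance away from $\mu_n$ rather than at $\mu_n$ itself. Once that separation is fixed, the sign-change argument is immediate.
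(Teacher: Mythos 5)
Your proposal is correct and follows essentially the same route as the paper: you substitute the integral-equation representations (with the bounds of Lemma 2.2) into the characteristic equation (\ref{equation 17}) to obtain $H(\mu)=\frac{\mu^{2}}{\delta\theta}(\cos\mu\pi-\sin\mu\pi)+O(\mu)$, which is exactly the paper's reduction to (\ref{equation 27}) since $\cos\mu\pi-\sin\mu\pi=-\sqrt{2}\sin\left(\mu\pi-\frac{\pi}{4}\right)$. The only difference is that you spell out, via the intermediate value theorem at points $\mu_n^{\pm}$ near $n+\frac14$, the sign-change argument that the paper compresses into ``obviously, for big $\mu$ Eq.\ (\ref{equation 27}) has an infinite set of roots.''
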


\begin{proof}
Differentiating (\ref{equation 13}) with respect to $\>x$, we readily see
that%
\begin{gather}
w_{3}^{\prime }(x,\mu )=-\frac{\mu }{\theta }w_{2}\left( h_{2},\mu \right)
\sin \mu \left( x-h_{2}\right) +\frac{w_{2}^{\prime }\left( h_{2},\mu
\right) }{\theta }\cos \mu \left( x-h_{2}\right)  \notag \\
-\int\limits_{h_{2}}^{{x}}q(\tau )\cos \mu \left( x-\tau \right) w_{3}(\tau
-\Delta \left( \tau \right) ,\mu )d\tau \text{.}  \label{equation 25}
\end{gather}%
With the helps of (\ref{equation 11}), (\ref{equation 12}), (\ref{equation
13}), (\ref{equation 17}), (\ref{equation 21}) and (\ref{equation 25}), we
have the following:%
\begin{equation*}
-\frac{\mu ^{2}}{\theta \delta }\left( \sin \mu \pi -\cos \mu \pi \right) -%
\frac{\mu }{\theta \delta }\int\limits_{0}^{h_{1}}q(\tau )\sin \mu (\pi
-\tau )w_{1}(\tau -\Delta (\tau ),\mu )d\tau
\end{equation*}%
\begin{equation*}
-\frac{\mu }{\theta }\int\limits_{h_{1}}^{h_{2}}q(\tau )\sin \mu (\pi -\tau
)w_{2}(\tau -\Delta (\tau ),\mu )d\tau
\end{equation*}%
\begin{equation*}
-\mu \int\limits_{h_{2}}^{\pi }q(\tau )\sin \mu (\pi -\tau )w_{3}(\tau
-\Delta (\tau ),\mu )d\tau
\end{equation*}%
\begin{equation*}
-\frac{\mu }{\theta \delta }\left( \sin \mu \pi +\cos \mu \pi \right) -\frac{%
1}{\theta \delta }\int\limits_{0}^{h_{1}}q(\tau )\cos \mu (\pi -\tau
)w_{1}(\tau -\Delta (\tau ),\mu )d\tau
\end{equation*}%
\begin{equation*}
-\frac{1}{\theta }\int\limits_{h_{1}}^{h_{2}}q(\tau )\cos \mu (\pi -\tau
)w_{2}(\tau -\Delta (\tau ),\mu )d\tau
\end{equation*}%
\begin{equation}
-\int\limits_{h_{2}}^{\pi }q(\tau )\cos \mu (\pi -\tau )w_{3}(\tau -\Delta
(\tau ),\mu )d\tau =0\text{.}  \label{equation 26}
\end{equation}%
Let $\>\mu \>$ be sufficiently big. Then, by (\ref{equation 18}), (\ref%
{equation 19}) and (\ref{equation 20}), Eq. (\ref{equation 26}) may be
rewritten in the form%
\begin{equation}
\mu \sin \left( \mu \pi -\frac{\pi }{4}\right) +O(1)=0\text{.}
\label{equation 27}
\end{equation}%
Obviously, for big $\>\mu $ $\>$Eq. (\ref{equation 27}) has an infinite set
of roots. Thus, the proof of theorem is completed.
\end{proof}

\section{\textbf{Asymptotic Formulas for Eigenvalues and Eigenfunctions}}

Now we begin to study asymptotic properties of eigenvalues and
eigenfunctions. In the following we shall assume that $\>\mu \>$ is
sufficiently big. From (\ref{equation 11}) and (\ref{equation 18}), we
obtain 
\begin{equation}
w_{1}(x,\>\mu )=O(1)\quad \mbox{on}\quad \lbrack 0,\>h_{1}].
\label{equation 28}
\end{equation}%
By (\ref{equation 12}) and (\ref{equation 19}), leads to 
\begin{equation}
w_{2}(x,\>\mu )=O(1)\quad \mbox{on}\quad \lbrack h_{1},\>h_{2}].
\label{equation 29}
\end{equation}%
By (\ref{equation 13}) and (\ref{equation 20}), leads to 
\begin{equation}
w_{3}(x,\>\mu )=O(1)\quad \mbox{on}\quad \lbrack h_{2},\>\pi ].
\label{equation 30}
\end{equation}%
The existence and continuity of the derivatives$\>$ $w_{1\mu }^{\prime
}(x,\>\mu )\>$ for $\>0\leq x\leq h_{1},\>|\mu |<\infty $, $w_{2\mu
}^{\prime }(x,\>\mu )\>$ for $\>h_{1}\leq x\leq h_{2},\>|\mu |<\infty $ and $%
w_{3\mu }^{\prime }(x,\>\mu )\>$ for $\>h_{2}\leq x\leq \pi ,\>|\mu |<\infty 
$ follows from Theorem 1.4.1 in \cite{Norkin 2}.

\begin{lemma}
The following holds true:
\end{lemma}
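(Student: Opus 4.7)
The plan is to bootstrap from the $O(1)$ bounds (\ref{equation 28})--(\ref{equation 30}) to sharp one-term asymptotic formulas for $w_1$, $w_2$, $w_3$ (and their $x$-derivatives) by substituting those bounds back into the integral equations (\ref{equation 11})--(\ref{equation 13}). Based on the role this lemma plays in the asymptotic analysis that follows, I expect the statement to read, for $\mu$ sufficiently large,
\[
w_1(x,\mu)=\cos\mu x-\sin\mu x+O(1/\mu), \qquad w_2(x,\mu)=\frac{1}{\delta}\bigl(\cos\mu x-\sin\mu x\bigr)+O(1/\mu),
\]
\[
w_3(x,\mu)=\frac{1}{\delta\theta}\bigl(\cos\mu x-\sin\mu x\bigr)+O(1/\mu),
\]
each on its own subinterval, together with companion formulas $w_1'=-\mu(\sin\mu x+\cos\mu x)+O(1)$, $w_2'=-\frac{\mu}{\delta}(\sin\mu x+\cos\mu x)+O(1)$ and $w_3'=-\frac{\mu}{\delta\theta}(\sin\mu x+\cos\mu x)+O(1)$ obtained by differentiation.

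For the first interval, the integrand in (\ref{equation 11}) is bounded by $|q(\tau)|$ times the constant from (\ref{equation 28}); since $q$ is integrable, the integral is $O(1)$ and the prefactor $1/\mu$ makes the remainder $O(1/\mu)$. The formula for $w_1'$ comes from (\ref{equation 21}) by the same argument, noting that no prefactor $1/\mu$ is present there, so the error is $O(1)$ rather than $O(1/\mu)$.

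For the middle interval, I substitute the asymptotics for $w_1(h_1,\mu)$ and $w_1'(h_1,\mu)/\mu$ into (\ref{equation 12}) and bound the integral by (\ref{equation 29}). The key algebraic step is to apply $\cos A\cos B-\sin A\sin B=\cos(A+B)$ and $\sin A\cos B+\cos A\sin B=\sin(A+B)$ with $A=\mu h_1$, $B=\mu(x-h_1)$, so that $A+B=\mu x$. All explicit $h_1$-dependence then cancels, leaving $\frac{1}{\delta}(\cos\mu x-\sin\mu x)$ as the main term. The derivative formula follows by the same procedure from (\ref{equation 23}). One more iteration, now substituting $w_2(h_2,\mu)$ and $w_2'(h_2,\mu)/\mu$ into (\ref{equation 13}) and collapsing the trigonometric products with $A=\mu h_2$, $B=\mu(x-h_2)$, delivers the formula for $w_3$ and, via (\ref{equation 25}), for $w_3'$.

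The only real obstacle is the bookkeeping in the two trigonometric collapses: one must verify that the four cross-terms combine exactly into $\cos\mu x-\sin\mu x$ with no residual dependence on $h_1$ or $h_2$, since this clean form is exactly what is needed to plug into the characteristic equation (\ref{equation 17}) and extract the eigenvalue asymptotics. Everything else is a routine iteration of the $O(1)$ estimates already established.
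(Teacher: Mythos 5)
Your proposal proves the wrong statement. The lemma in question is the one accompanied by formulas (\ref{equation 31})--(\ref{equation 33}), which assert that the partial derivatives of $w_1$, $w_2$, $w_3$ \emph{with respect to the eigenparameter} $\mu$, denoted $w_{1\mu}^{\prime}(x,\mu)$, $w_{2\mu}^{\prime}(x,\mu)$, $w_{3\mu}^{\prime}(x,\mu)$ (their existence and continuity being quoted from Theorem 1.4.1 of \cite{Norkin 2} just before the lemma), remain $O(1)$ uniformly on $[0,h_1]$, $[h_1,h_2]$, $[h_2,\pi]$ for large $\mu$. What you have sketched are instead the one-term asymptotics in $x$, namely $w_1=\cos\mu x-\sin\mu x+O(1/\mu)$ and its companions together with the $x$-derivative formulas; those are precisely formulas (\ref{equation 36})--(\ref{equation 39}) and (\ref{equation 37}) of the paper, which are established \emph{after} this lemma and are logically distinct from it. Note also that your formulas, even granted, do not yield the lemma: an asymptotic relation with an $O(1/\mu)$ remainder cannot be differentiated with respect to $\mu$, so boundedness of $w_{i\mu}^{\prime}$ does not follow from them.

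The paper's actual argument is a bootstrap on the integral equations themselves: differentiate (\ref{equation 13}) with respect to $\mu$ to obtain
\begin{equation*}
w_{3\mu }^{\prime }(x,\mu )=-\frac{1}{\mu }\int_{h_{2}}^{x}q(\tau )\sin \mu (x-\tau )\,w_{3\mu }^{\prime }\bigl( \tau -\Delta ( \tau ) ,\mu \bigr)\,d\tau +R(x,\mu ),\qquad \left\vert R(x,\mu )\right\vert \leq R_{0},
\end{equation*}
where the remainder $R$ collects all terms not containing $w_{3\mu}^{\prime}$ and is bounded using the estimates (\ref{equation 18})--(\ref{equation 20}) and the already-proved cases (\ref{equation 31}), (\ref{equation 32}). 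Setting $D_{\mu }=\max_{[h_{2},\pi ]}|w_{3\mu }^{\prime }(x,\mu )|$ gives the inequality $D_{\mu }\leq \frac{q_{3}}{\mu }D_{\mu }+R_{0}$, hence $D_{\mu }\leq 2R_{0}$ once $\mu \geq 2q_{3}$, which is (\ref{equation 33}); the estimates (\ref{equation 31}) and (\ref{equation 32}) are proved in the same way on their intervals. This bound on the $\mu$-derivative is what is later needed (in the proof that exactly one eigenvalue lies near each $N+\frac{1}{4}$) to guarantee that the $O(1)$ term in (\ref{equation 27}) has bounded derivative in $\mu$, so an argument that never differentiates with respect to $\mu$ cannot serve as a proof of this lemma.
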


\begin{align}
w_{1\mu }^{\prime }(x,\mu )& =O(1),\quad x\in \lbrack 0,h_{1}]\text{,}
\label{equation 31} \\
w_{2\mu }^{\prime }(x,\mu )& =O(1),\quad x\in \lbrack \>h_{1},h_{2}]\text{,}
\label{equation 32} \\
w_{3\mu }^{\prime }(x,\mu )& =O(1),\quad x\in \lbrack \>h_{2},\pi ]\text{.}
\label{equation 33}
\end{align}

\begin{proof}
By differentiating (\ref{equation 13}) with respect to $\mu $, we get, by (%
\ref{equation 31}) and (\ref{equation 32})%
\begin{equation}
w_{3\mu }^{\prime }(x,\mu )=-\frac{1}{\mu }\dint\limits_{h_{2}}^{x}q(\tau
)\sin \mu (x-\tau )w_{3\mu }^{\prime }\left( \tau -\Delta \left( \tau
\right) ,\mu \right) +R(x,\mu ),\text{ \ }(\left\vert R(x,\mu )\right\vert
\leq R_{0})\text{.}  \label{equation 34}
\end{equation}%
Let $D_{\mu }=\max_{[h_{2},\pi ]}\left\vert w_{3\mu }^{\prime }(x,\mu
)\right\vert $. Then the existance of $D_{\mu }$ follows from continuity of
derivation for $x\in \lbrack h_{2},\pi ]$. From (\ref{equation 34})%
\begin{equation*}
D_{\mu }\leq \frac{1}{\mu }q_{3}D_{\mu }+R_{0}\text{.}
\end{equation*}%
Now let $\mu \geq 2q_{3}$. Then $D_{\mu }\leq 2R_{0}$ and the validity of
the asymptotic formula (\ref{equation 33}) follows. Formulas (\ref{equation
31}) and (\ref{equation 32}) may be proved analogically.
\end{proof}

Let $N$ be a natural number. We shall say that the number $\mu $ takes part
near the number $\left( N+\frac{1}{4}\right) ^{2}$ if $\left\vert N+\frac{1}{%
4}-\mu \right\vert <\frac{1}{4}$

\begin{theorem}
Let $N$ be a natural number. For each sufficiently big values of $N$ there
is exactly one eigenvalue of the problem (\ref{equation 1})-(\ref{equation 7}%
)\ near $N+\frac{1}{4}$.
\end{theorem}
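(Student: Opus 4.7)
The plan is to combine an intermediate-value argument (for existence) with a monotonicity argument (for uniqueness) on the closed interval $I_{N}=[N,\,N+\tfrac{1}{2}]$, whose interior is precisely the neighbourhood of $N+\tfrac{1}{4}$ appearing in the definition preceding the theorem. Using the identity $\sin\mu\pi-\cos\mu\pi=\sqrt{2}\sin(\mu\pi-\pi/4)$, I would first rewrite \eqref{equation 26} as
\begin{equation*}
H(\mu)=-\frac{\sqrt{2}\,\mu^{2}}{\theta\delta}\sin\!\left(\mu\pi-\tfrac{\pi}{4}\right)+G(\mu),
\end{equation*}
where $G(\mu)=O(\mu)$ as $\mu\to\infty$, the bound on $G$ following from the uniform estimates \eqref{equation 18}--\eqref{equation 20} applied to each integral in \eqref{equation 26}. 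Evaluating $\sin(\mu\pi-\pi/4)$ at $\mu=N$ and $\mu=N+\tfrac{1}{2}$ yields $-(-1)^{N}\tfrac{\sqrt{2}}{2}$ and $(-1)^{N}\tfrac{\sqrt{2}}{2}$ respectively, so the leading term of $H$ has opposite signs at the endpoints of $I_{N}$ and magnitude at least $N^{2}/|\theta\delta|$ there, dominating $|G|=O(N)$ once $N$ is large. The intermediate value theorem then supplies at least one zero of $H$ in the open interval $(N,\,N+\tfrac{1}{2})$.

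For uniqueness I would establish strict monotonicity of $H$ on $I_{N}$ for all sufficiently large $N$. Differentiating $H(\mu)=\mu^{2}w(\pi,\mu)+w'(\pi,\mu)$ termwise through \eqref{equation 26} and invoking Lemma 3 together with the analogous $O(1)$ estimate for $w_{i\mu}$ (obtained by the same Gronwall-type iteration), one arrives at
\begin{equation*}
H'(\mu)=-\frac{\sqrt{2}\,\pi\,\mu^{2}}{\theta\delta}\cos\!\left(\mu\pi-\tfrac{\pi}{4}\right)+O(\mu).
\end{equation*}
Since $|\cos(\mu\pi-\pi/4)|\ge\sqrt{2}/2$ throughout $I_{N}$, the leading term of $H'$ never vanishes and has magnitude at least $\pi N^{2}/|\theta\delta|$, which overwhelms the $O(\mu)$ remainder for large $N$. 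Consequently $H'$ keeps a constant sign on $I_{N}$, $H$ is strictly monotone there, and the zero furnished by the intermediate value theorem is unique.

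The main technical obstacle is the careful bookkeeping required to derive the asymptotic formula for $H'(\mu)$. One has to differentiate each term in \eqref{equation 26}---trigonometric factors, $\mu$-dependent coefficients, and integral kernels alike---and then verify that every contribution other than the leading $O(\mu^{2})$ cosine term is genuinely $O(\mu)$. The $O(1)$ bound on $w_{i\mu}'$ from Lemma 3, together with the analogous $O(1)$ bound on $w_{i\mu}$, is precisely what prevents a spurious $O(\mu^{2})$ contribution from the differentiated integrands from competing with the leading cosine term and destroying monotonicity on $I_{N}$.
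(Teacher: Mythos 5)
Your proposal is correct and is essentially the paper's proof: both rest on the asymptotic characteristic equation, getting existence from the sign change of the dominant $\sin\left(\mu\pi-\frac{\pi}{4}\right)$ term on $\left(N,N+\frac{1}{2}\right)$ and uniqueness from the fact that the $\mu$-derivative is dominated there by the non-vanishing $\mu\pi\cos\left(\mu\pi-\frac{\pi}{4}\right)$ term, controlled via the $O(1)$ bounds of Lemma 3 (your strict-monotonicity step is exactly the paper's Rolle argument). The only cosmetic difference is that you keep the unnormalized $H(\mu)$ with leading term of order $\mu^{2}$, whereas the paper first divides through to work with $\phi(\mu)=\mu\sin\left(\mu\pi-\frac{\pi}{4}\right)+O(1)$ as in (\ref{equation 27}).
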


\begin{proof}
We consider the expression which is denoted by $\>O(1)$ in Equation (\ref%
{equation 27}):%
\begin{equation*}
-\int\limits_{0}^{h_{1}}q(\tau )\sin \mu (\pi -\tau )w_{1}(\tau -\Delta
(\tau ),\mu )d\tau
\end{equation*}%
\begin{equation*}
-\delta \int\limits_{h_{1}}^{h_{2}}q(\tau )\sin \mu (\pi -\tau )w_{2}(\tau
-\Delta (\tau ),\mu )d\tau
\end{equation*}%
\begin{equation*}
-\theta \delta \int\limits_{h_{2}}^{\pi }q(\tau )\sin \mu (\pi -\tau
)w_{3}(\tau -\Delta (\tau ),\mu )d\tau
\end{equation*}%
\begin{equation*}
-\left( \sin \mu \pi +\cos \mu \pi \right) -\frac{1}{\mu }%
\int\limits_{0}^{h_{1}}q(\tau )\cos \mu (\pi -\tau )w_{1}(\tau -\Delta (\tau
),\mu )d\tau
\end{equation*}%
\begin{equation*}
-\frac{\delta }{\mu }\int\limits_{h_{1}}^{h_{2}}q(\tau )\cos \mu (\pi -\tau
)w_{2}(\tau -\Delta (\tau ),\mu )d\tau
\end{equation*}%
\begin{equation*}
-\frac{\delta \theta }{\mu }\int\limits_{h_{2}}^{\pi }q(\tau )\cos \mu (\pi
-\tau )w_{3}(\tau -\Delta (\tau ),\mu )d\tau .
\end{equation*}%
If formulas (\ref{equation 28})-(\ref{equation 33}) are taken into
consideration, it can be shown by differentiation with respect to $\>\mu $
that for big$\>$ $\mu $ $\>$this expression has bounded derivative. We shall
show that, for big $\>N$, only one root (\ref{equation 27}) lies near to
each $N$. We consider the function $\>\phi (\mu )=\mu \sin \left( \mu \pi -%
\frac{\pi }{4}\right) +O(1)$. Its derivative, which has the form $\>\phi
^{\prime }(\mu )=\sin \left( \mu \pi -\frac{\pi }{4}\right) +\mu \pi \cos
\left( \mu \pi -\frac{\pi }{4}\right) +O(1)$, does not vanish for $\>\mu \>$
close to$\>N+\frac{1}{4}\>$for sufficiently big$\>$ $N$. Thus our assertion
follows by Rolle's Theorem.
\end{proof}

Let $\>N\>$ be sufficiently big. In what follows we shall denote by $\mu
_{n}^{2}$ the eigenvalue of the problem (\ref{equation 1})-(\ref{equation 7}%
). We set $\mu _{N}=N+\frac{1}{4}+\delta _{N}$. Then from (\ref{equation 27}%
) it follows that $\delta _{N}=O\left( \frac{1}{N}\right) $. Consequently%
\begin{equation}
\mu _{N}=N+\frac{1}{4}+O\bigl ({\frac{1}{N}}\bigr ),  \label{equation 35}
\end{equation}%
Formula (\ref{equation 35}) make it possible to obtain asymptotic
expressions for eigenfunction of the problem (\ref{equation 1})-(\ref%
{equation 7}). From (\ref{equation 11}), (\ref{equation 21}) and (\ref%
{equation 28}), we get 
\begin{equation}
w_{1}(x,\>\mu )=\sqrt{2}\cos \left( \frac{\pi }{4}+\mu x\right) +O\bigl ({%
\frac{1}{\mu }}\bigr ),  \label{equation 36}
\end{equation}%
and%
\begin{equation}
w_{1}^{\prime }(x,\>\mu )=-\mu \sqrt{2}\sin \left( \frac{\pi }{4}+\mu
x\right) +O\bigl ({1}\bigr ).  \label{equation 37}
\end{equation}%
From expressions of (\ref{equation 12}), (\ref{equation 22}), (\ref{equation
27}), (\ref{equation 29}) and (37), we easily see that%
\begin{equation}
w_{2}(x,\>\mu )=\frac{\sqrt{2}}{\delta }\cos \left( \frac{\pi }{4}+\mu
x\right) +O\bigl ({\frac{1}{\mu }}\bigr ).  \label{equation 38}
\end{equation}%
and%
\begin{equation}
w_{3}(x,\>\mu )=\frac{\sqrt{2}}{\delta \theta }\cos \left( \frac{\pi }{4}%
+\mu x\right) +O\bigl ({\frac{1}{\mu }}\bigr ).  \label{equation 39}
\end{equation}%
By substituting (\ref{equation 35}) into (\ref{equation 36}), (\ref{equation
38}) and (\ref{equation 39}), we find that%
\begin{align*}
U_{1N}& =w_{1}\left( x,\mu _{N}\right) =\sqrt{2}\cos \left( \frac{\pi }{4}%
+\left( N+\frac{1}{4}\right) x\right) +O\bigl ({\frac{1}{N}}\bigr )\text{,}
\\
U_{2N}& =w_{2}\left( x,\mu _{N}\right) =\frac{\sqrt{2}}{\delta }\cos \left( 
\frac{\pi }{4}+\left( N+\frac{1}{4}\right) x\right) +O\bigl ({\frac{1}{N}}%
\bigr )\text{,} \\
U_{3N}& =w_{3}\left( x,\mu _{N}\right) =\frac{\sqrt{2}}{\delta \theta }\cos
\left( \frac{\pi }{4}+\left( N+\frac{1}{4}\right) x\right) +O\bigl ({\frac{1%
}{N}}\bigr )\text{.}
\end{align*}%
Hence the eigenfunctions $\>u_{N}(x)\>$ have the following asymptotic
representation:%
\begin{equation*}
U_{N}(x)=\left\{ 
\begin{array}{ccc}
U_{1N}\text{,} & \mbox{for} & x\in \lbrack 0,h_{1}), \\ 
U_{2N}\text{,} & \mbox{for} & x\in \left( h_{1},h_{2}\right) , \\ 
U_{3N}\text{,} & \mbox{for} & x\in \left( h_{2},\pi \right] .%
\end{array}%
\right.
\end{equation*}%
Under some additional conditions the more exact asymptotic formulas which
depend upon the retardation may be obtained. Let us assume that the
following conditions are fulfilled:

a.) The derivatives $q^{\prime}(x)$ and $\Delta^{\prime\prime}(x)$ exist and
are bounded in $\left[ 0,h_{1}\right) \cup\left( h_{1},h_{2}\right)
\cup\left( h_{2},\pi\right] $ and have finite limits $q^{\prime}\left(
h_{1}\pm0\right) =\lim_{x\rightarrow h_{1}\pm0}q^{\prime}(x),$ $q^{\prime
}\left( h_{2}\pm0\right) =\lim_{x\rightarrow h_{2}\pm0}q^{\prime}(x)$ and $%
\Delta^{\prime\prime}\left( h_{1}\pm0\right) =\lim_{x\rightarrow h_{1}\pm
0}\Delta^{\prime\prime}(x),$ $\Delta^{\prime\prime}\left( h_{2}\pm0\right)
=\lim_{x\rightarrow h_{2}\pm0}\Delta^{\prime\prime}(x)$, respectively.

b.) $\Delta^{\prime}\left( x\right) \leq1$ in $\left[ 0,h_{1}\right)
\cup\left( h_{1},h_{2}\right) \cup\left( h_{2},\pi\right] ,$ $\Delta\left(
0\right) =0,$ $\lim_{x\rightarrow h_{1}+0}\Delta\left( x\right) =0$ and $%
\lim_{x\rightarrow h_{2}+0}\Delta\left( x\right) =0.$

It is easy to see that, using b.)%
\begin{align}
x-\Delta \left( x\right) & \geq 0,\text{ }x\in \left[ 0,h_{1}\right) \text{,}
\label{equation 40} \\
x-\Delta \left( x\right) & \geq h_{1},\text{ }x\in \left( h_{1},h_{2}\right) 
\text{,}  \label{equation 41}
\end{align}%
and%
\begin{equation}
x-\Delta \left( x\right) \geq h_{2},\text{ }x\in \left( h_{2},\pi \right]
\label{equation 42}
\end{equation}

are obtained.

By (\ref{equation 36}), (\ref{equation 38}), (\ref{equation 39}), (\ref%
{equation 40}), (\ref{equation 41}) and (\ref{equation 42}) we have%
\begin{equation}
w_{1}\left( \tau -\Delta \left( \tau \right) ,\mu \right) =\sqrt{2}\cos
\left( \frac{\pi }{4}+\mu \left( \tau -\Delta \left( \tau \right) \right)
\right) +O\left( \frac{1}{\mu }\right) \text{,}  \label{equation 43}
\end{equation}%
\begin{equation}
w_{2}\left( \tau -\Delta \left( \tau \right) ,\mu \right) =\frac{\sqrt{2}}{%
\delta }\cos \left( \frac{\pi }{4}+\mu \left( \tau -\Delta \left( \tau
\right) \right) \right) +O\left( \frac{1}{\mu }\right)  \label{equation 44}
\end{equation}%
and%
\begin{equation}
w_{3}\left( \tau -\Delta \left( \tau \right) ,\mu \right) =\frac{\sqrt{2}}{%
\delta \theta }\cos \left( \frac{\pi }{4}+\mu \left( \tau -\Delta \left(
\tau \right) \right) \right) +O\left( \frac{1}{\mu }\right)
\label{equation 45}
\end{equation}%
on $\left[ 0,h_{1}\right) ,$ $\left( h_{1},h_{2}\right) $ and $\left(
h_{2},\pi \right] $ respectively.

Under the conditions a.) and b.) the following formulas:%
\begin{equation}
\left. 
\begin{array}{c}
\int_{0}^{x}q\left( \tau \right) \cos \mu \left( 2\tau -\Delta \left( \tau
\right) \right) d\tau =O\left( \frac{1}{\mu }\right) \text{,} \\ 
\int_{0}^{x}q\left( \tau \right) \sin \mu \left( 2\tau -\Delta \left( \tau
\right) \right) d\tau =O\left( \frac{1}{\mu }\right)%
\end{array}%
\right\}  \label{equation 46}
\end{equation}%
can be proved by the same technique in Lemma 3.3.3 in \cite{Norkin 2}.

Putting the expressions (\ref{equation 43}), (\ref{equation 44}) and (\ref%
{equation 45}) into (\ref{equation 26}), and then using (\ref{equation 46}),
after long operations we have%
\begin{equation*}
\tan \pi \left( \mu -\frac{1}{4}\right) =-\frac{1}{2\mu }\left[
2+\int_{0}^{\pi }q\left( \tau \right) \cos \mu \Delta \left( \tau \right)
d\tau \right] +O\left( \frac{1}{\mu ^{2}}\right) .
\end{equation*}%
Again, if we take $\mu _{N}=N+\frac{1}{4}+\delta _{N},$ we obtain for
sufficiently big $N,$%
\begin{equation*}
\delta _{N}=-\frac{2}{\pi \left( 4N+1\right) }\left( 2+\int_{0}^{\pi
}q\left( \tau \right) \cos \left( \left( N+\frac{1}{4}\right) \Delta \left(
\tau \right) \right) d\tau \right) +O\left( \frac{1}{N^{2}}\right)
\end{equation*}%
and finally%
\begin{equation}
\mu _{N}=N+\frac{1}{4}-\frac{2}{\pi \left( 4N+1\right) }\left(
2+\int_{0}^{\pi }q\left( \tau \right) \cos \left( \left( N+\frac{1}{4}%
\right) \Delta \left( \tau \right) \right) d\tau \right) +O\left( \frac{1}{%
N^{2}}\right) \text{.}  \label{equation 47}
\end{equation}%
Thus, we have proven the following theorem:

\begin{theorem}
If conditions a.) and b.) are satisfied then, the eigenvalues $\mu _{N}^{2}$
of the problem (\ref{equation 1})-(\ref{equation 7}) have the (\ref{equation
47}) asymptotic formula for $N\rightarrow \infty $.
\end{theorem}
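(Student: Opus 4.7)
The plan is to take the characteristic equation (\ref{equation 26}) and improve the $O(1)$ bound appearing in (\ref{equation 27}) by inserting the sharper pointwise asymptotics (\ref{equation 43})--(\ref{equation 45}) for $w_i(\tau-\Delta(\tau),\mu)$ into the six integrals, then collect like trigonometric terms and throw away oscillatory remainders via (\ref{equation 46}). First I would justify (\ref{equation 43})--(\ref{equation 45}) themselves: condition b.)\ together with (\ref{equation 40})--(\ref{equation 42}) guarantees that the retarded argument $\tau-\Delta(\tau)$ lies in the proper sub-interval where (\ref{equation 36}), (\ref{equation 38}), (\ref{equation 39}) apply, so the displayed formulas follow directly by evaluating those asymptotics at $\tau-\Delta(\tau)$ in place of $x$.

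Next I would substitute (\ref{equation 43})--(\ref{equation 45}) into the six integrals in (\ref{equation 26}). Using the product-to-sum identities
\[
\sin\mu(\pi-\tau)\cos\bigl(\tfrac{\pi}{4}+\mu(\tau-\Delta(\tau))\bigr)=\tfrac{1}{2}\bigl[\sin\bigl(\mu\pi-\tfrac{\pi}{4}+\mu\Delta(\tau)\bigr)+\sin\bigl(\mu(\pi-2\tau+\Delta(\tau))-\tfrac{\pi}{4}\bigr)\bigr],
\]
and the analogous expansion with $\cos\mu(\pi-\tau)$ in front, the $\mu$-dependence of the integrand separates into a piece depending only on $\Delta(\tau)$ (which can be factored out of the $\pi$-dependent trigonometry in $\mu\pi-\pi/4$) and a piece of the form $\sin\mu(2\tau-\Delta(\tau))$ or $\cos\mu(2\tau-\Delta(\tau))$ (times a slow factor). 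The second kind of term is exactly the one controlled by (\ref{equation 46}), which uses integration by parts together with the boundedness of $q'$ and $\Delta''$ from condition a.)\ plus $\Delta'\le 1$ from b.) to gain a factor of $\mu^{-1}$. The three $O(1/\mu)$ error terms in (\ref{equation 43})--(\ref{equation 45}) contribute, after multiplication by the prefactor $\mu/(\theta\delta)$ etc.\ in (\ref{equation 26}), manageable $O(1)$ pieces that telescope into the overall remainder.

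Collecting everything, the leading $\mu^2$-term of (\ref{equation 26}) produces $\sqrt{2}\,\mu^2\sin(\mu\pi-\pi/4)$, the leading $\mu$-term produces $\sqrt{2}\,\mu\cos(\mu\pi-\pi/4)$ plus $\tfrac{1}{\sqrt{2}}\mu\bigl(\sin(\mu\pi-\pi/4)+\cos(\mu\pi-\pi/4)\bigr)\int_0^\pi q(\tau)\cos\mu\Delta(\tau)\,d\tau$ (after the surviving non-oscillatory pieces are assembled across the three sub-intervals, where the prefactors $1/(\theta\delta)$ cancel against the factors $\delta$ and $\delta\theta$ in the $w_2$ and $w_3$ pieces), and the remainder is $O(1)$. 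Dividing through by $\mu^2\cos(\mu\pi-\pi/4)$ and solving for the tangent yields
\[
\tan\pi\bigl(\mu-\tfrac{1}{4}\bigr)=-\frac{1}{2\mu}\Bigl[2+\int_0^\pi q(\tau)\cos\mu\Delta(\tau)\,d\tau\Bigr]+O\bigl(\tfrac{1}{\mu^2}\bigr).
\]

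Finally, I substitute $\mu_N=N+\tfrac{1}{4}+\delta_N$. By the previous theorem $\delta_N=O(1/N)$, so $\tan\pi(\mu_N-\tfrac{1}{4})=\tan\pi\delta_N=\pi\delta_N+O(\delta_N^3)$, and $\mu_N^{-1}=\tfrac{4}{4N+1}+O(N^{-3})$. Equating and solving for $\delta_N$ gives exactly the claimed formula (\ref{equation 47}). The main obstacle is the middle step: keeping accurate track of the trigonometric identities and cancellations across the three sub-intervals so that the constant $2$ and the coefficient $-\tfrac{2}{\pi(4N+1)}$ emerge with the correct sign; once the oscillatory integrals are discarded using (\ref{equation 46}), the remainder of the argument is a routine implicit-function expansion.
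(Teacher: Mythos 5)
Your proposal follows essentially the same route as the paper: substitute the refined asymptotics (\ref{equation 43})--(\ref{equation 45}) into the characteristic equation (\ref{equation 26}), discard the oscillatory integrals via (\ref{equation 46}), obtain the tangent relation, and expand $\mu_N=N+\tfrac14+\delta_N$ to get (\ref{equation 47}) — exactly the "long operations" the paper leaves implicit. Only minor bookkeeping slips occur: the first summand of your product-to-sum identity should read $\sin\bigl(\mu\pi+\tfrac{\pi}{4}-\mu\Delta(\tau)\bigr)$, and the expansion also produces a term with $\int_0^{\pi}q(\tau)\sin\mu\Delta(\tau)\,d\tau$, which however is multiplied by $\tan\pi\bigl(\mu-\tfrac14\bigr)=O(1/\mu)$ near the root and hence is absorbed into the $O(1/\mu^{2})$ remainder, so the stated conclusion is unaffected.
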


Now, we may obtain sharper asymptotic formulas for the eigenfunctions. From (%
\ref{equation 11}), (\ref{equation 43}), (\ref{equation 46}) and replacing $%
\mu $ by $\mu _{N}$ we have%
\begin{equation*}
u_{1N}(x)=\cos \left( \left( N+\frac{1}{4}\right) x\right) \left[ 1+\frac{1}{%
2N}\left[ \int_{0}^{x}q\left( \tau \right) \sin \left( \left( N+\frac{1}{4}%
\right) \Delta \left( \tau \right) \right) d\tau \right. \right.
\end{equation*}%
\begin{equation*}
\left. \left. -\int_{0}^{x}q\left( \tau \right) \cos \left( \left( N+\frac{1%
}{4}\right) \Delta \left( \tau \right) \right) d\tau \right] \right] +\cos
\left( \left( N+\frac{1}{4}\right) x\right)
\end{equation*}%
\begin{equation*}
\times \left[ \frac{2x}{\pi \left( 4N+1\right) }\left[ 2+\int_{0}^{\pi
}q\left( \tau \right) \cos \left( \left( N+\frac{1}{4}\right) \Delta \left(
\tau \right) \right) d\tau \right] \right]
\end{equation*}%
\begin{equation*}
-\sin \left( \left( N+\frac{1}{4}\right) x\right) \left[ 1+\frac{1}{2N}\left[
\int_{0}^{x}q\left( \tau \right) \sin \left( \left( N+\frac{1}{4}\right)
\Delta \left( \tau \right) \right) d\tau \right. \right.
\end{equation*}%
\begin{equation*}
\left. \left. -\int_{0}^{x}q\left( \tau \right) \cos \left( \left( N+\frac{1%
}{4}\right) \Delta \left( \tau \right) \right) d\tau \right] \right] +\sin
\left( \left( N+\frac{1}{4}\right) x\right)
\end{equation*}%
\begin{equation}
\times \left[ \frac{2x}{\pi \left( 4N+1\right) }\left[ 2+\int_{0}^{\pi
}q\left( \tau \right) \cos \left( \left( N+\frac{1}{4}\right) \Delta \left(
\tau \right) \right) d\tau \right] \right] +O\left( \frac{1}{N^{2}}\right) 
\text{.}  \label{equation 48}
\end{equation}%
From (\ref{equation 12}), (\ref{equation 44}), (\ref{equation 46}) and
replacing $\mu $ by $\mu _{N}$ we have

\begin{equation*}
u_{2N}(x)=\frac{1}{\delta }\left\{ \cos \left( \left( N+\frac{1}{4}\right)
x\right) \left[ 1+\frac{1}{2N}\left[ \int_{0}^{x}q\left( \tau \right) \sin
\left( \left( N+\frac{1}{4}\right) \Delta \left( \tau \right) \right) d\tau
\right. \right. \right.
\end{equation*}%
\begin{equation*}
\left. \left. -\int_{0}^{x}q\left( \tau \right) \cos \left( \left( N+\frac{1%
}{4}\right) \Delta \left( \tau \right) \right) d\tau \right] \right] +\cos
\left( \left( N+\frac{1}{4}\right) x\right)
\end{equation*}%
\begin{equation*}
\times \left[ \frac{2x}{\pi \left( 4N+1\right) }\left[ 2+\int_{0}^{\pi
}q\left( \tau \right) \cos \left( \left( N+\frac{1}{4}\right) \Delta \left(
\tau \right) \right) d\tau \right] \right]
\end{equation*}%
\begin{equation*}
-\sin \left( \left( N+\frac{1}{4}\right) x\right) \left[ 1+\frac{1}{2N}\left[
\int_{0}^{x}q\left( \tau \right) \sin \left( \left( N+\frac{1}{4}\right)
\Delta \left( \tau \right) \right) d\tau \right. \right.
\end{equation*}%
\begin{equation*}
\left. \left. -\int_{0}^{x}q\left( \tau \right) \cos \left( \left( N+\frac{1%
}{4}\right) \Delta \left( \tau \right) \right) d\tau \right] \right] +\sin
\left( \left( N+\frac{1}{4}\right) x\right)
\end{equation*}%
\begin{equation}
\left. \times \left[ \frac{2x}{\pi \left( 4N+1\right) }\left[
2+\int_{0}^{\pi }q\left( \tau \right) \cos \left( \left( N+\frac{1}{4}%
\right) \Delta \left( \tau \right) \right) d\tau \right] \right] \right\}
+O\left( \frac{1}{N^{2}}\right) \text{.}  \label{equation 49}
\end{equation}

From (\ref{equation 13}), (\ref{equation 45}), (\ref{equation 46}), and
replacing $\mu $ by $\mu _{N}$, after long operations we have

\begin{equation*}
u_{3N}(x)=\frac{1}{\delta \theta }\left\{ \cos \left( \left( N+\frac{1}{4}%
\right) x\right) \left[ 1+\frac{1}{2N}\left[ \int_{0}^{x}q\left( \tau
\right) \sin \left( \left( N+\frac{1}{4}\right) \Delta \left( \tau \right)
\right) d\tau \right. \right. \right.
\end{equation*}%
\begin{equation*}
\left. \left. -\int_{0}^{x}q\left( \tau \right) \cos \left( \left( N+\frac{1%
}{4}\right) \Delta \left( \tau \right) \right) d\tau \right] \right] +\cos
\left( \left( N+\frac{1}{4}\right) x\right)
\end{equation*}%
\begin{equation*}
\times \left[ \frac{2x}{\pi \left( 4N+1\right) }\left[ 2+\int_{0}^{\pi
}q\left( \tau \right) \cos \left( \left( N+\frac{1}{4}\right) \Delta \left(
\tau \right) \right) d\tau \right] \right]
\end{equation*}%
\begin{equation*}
-\sin \left( \left( N+\frac{1}{4}\right) x\right) \left[ 1+\frac{1}{2N}\left[
\int_{0}^{x}q\left( \tau \right) \sin \left( \left( N+\frac{1}{4}\right)
\Delta \left( \tau \right) \right) d\tau \right. \right.
\end{equation*}%
\begin{equation*}
\left. \left. -\int_{0}^{x}q\left( \tau \right) \cos \left( \left( N+\frac{1%
}{4}\right) \Delta \left( \tau \right) \right) d\tau \right] \right] +\sin
\left( \left( N+\frac{1}{4}\right) x\right)
\end{equation*}%
\begin{equation}
\left. \times \left[ \frac{2x}{\pi \left( 4N+1\right) }\left[
2+\int_{0}^{\pi }q\left( \tau \right) \cos \left( \left( N+\frac{1}{4}%
\right) \Delta \left( \tau \right) \right) d\tau \right] \right] \right\}
+O\left( \frac{1}{N^{2}}\right) \text{.}  \label{equation 50}
\end{equation}

Thus, we have proven the following theorem:

\begin{theorem}
If conditions a.) and b.) are satisfied then, the eigenfunctions $U_{N}(x)$
of the problem (\ref{equation 1})-(\ref{equation 7}) have the following
asymptotic formula for $N\rightarrow \infty $:%
\begin{equation*}
U_{N}(x)=\left\{ 
\begin{array}{cc}
U_{1N}(x), & x\in \left[ 0,h_{1}\right) , \\ 
U_{2N}(x), & x\in \left( h_{1},h_{2}\right) , \\ 
U_{3N}(x), & x\in \left( h_{2},\pi \right]%
\end{array}%
\right.
\end{equation*}%
where $U_{1N}(x),$ $U_{2N}(x)$ and $U_{3N}(x)$ determined as in (\ref%
{equation 48}), (\ref{equation 49}) and (\ref{equation 50}) respectively.
\end{theorem}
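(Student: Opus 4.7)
The plan is to derive (\ref{equation 48}), (\ref{equation 49}), and (\ref{equation 50}) in parallel by substituting the refined pointwise approximations (\ref{equation 43})-(\ref{equation 45}) for $w_i(\tau-\Delta(\tau),\mu)$ into the integral equations (\ref{equation 11})-(\ref{equation 13}), applying product-to-sum trigonometric identities, absorbing the resulting rapidly oscillating integrals by means of (\ref{equation 46}), and finally replacing $\mu$ by $\mu_N$ from (\ref{equation 47}) with $\cos(\mu_N x)$ and $\sin(\mu_N x)$ expanded to first order in $\delta_N$.

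\textbf{Refined expansion of $w_1$.} First I would insert (\ref{equation 43}) into (\ref{equation 11}); the correction integral takes the form $-\frac{\sqrt{2}}{\mu}\int_0^x q(\tau)\sin\mu(x-\tau)\cos\bigl(\frac{\pi}{4}+\mu(\tau-\Delta(\tau))\bigr)d\tau+O(1/\mu^2)$. Applying the identity $\sin A\cos B=\frac{1}{2}[\sin(A+B)+\sin(A-B)]$ with $A=\mu(x-\tau)$ and $B=\frac{\pi}{4}+\mu(\tau-\Delta(\tau))$ splits the integrand into a piece with phase $\mu x-\mu\Delta(\tau)+\frac{\pi}{4}$ and a piece with phase $\mu x-\mu(2\tau-\Delta(\tau))-\frac{\pi}{4}$. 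Expanding the latter by the sine addition formula produces the integrals $\int_0^x q(\tau)\cos\mu(2\tau-\Delta(\tau))d\tau$ and $\int_0^x q(\tau)\sin\mu(2\tau-\Delta(\tau))d\tau$, both $O(1/\mu)$ by (\ref{equation 46}), so their total contribution is $O(1/\mu^2)$. Expanding the former similarly produces the genuine first-order correction: a linear combination of $\cos\mu x$ and $\sin\mu x$ whose coefficients are built from $\frac{1}{2\mu}\int_0^x q(\tau)\cos\mu\Delta(\tau)d\tau$ and $\frac{1}{2\mu}\int_0^x q(\tau)\sin\mu\Delta(\tau)d\tau$.

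\textbf{Passage to $\mu_N$ and to the other intervals.} Next I would set $\mu=\mu_N=N+\frac{1}{4}+\delta_N$ with $\delta_N=O(1/N)$ from (\ref{equation 47}). First-order Taylor expansions $\cos(\mu_N x)=\cos((N+\frac{1}{4})x)-\delta_N x\sin((N+\frac{1}{4})x)+O(1/N^2)$ and the analogous one for $\sin(\mu_N x)$ convert the leading $\cos\mu x-\sin\mu x$ into the principal part of (\ref{equation 48}) together with a term proportional to $\delta_N x$, which after the substitution of the explicit value of $\delta_N$ from (\ref{equation 47}) yields the factor $\frac{2x}{\pi(4N+1)}\bigl[2+\int_0^\pi q(\tau)\cos((N+\frac{1}{4})\Delta(\tau))d\tau\bigr]$. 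Inside the $\frac{1}{2\mu}$ correction produced in the previous step, $\mu$ may simply be replaced by $N+\frac{1}{4}$, the induced error being $O(1/N^2)$; collecting the terms gives (\ref{equation 48}). The derivations of (\ref{equation 49}) and (\ref{equation 50}) are structurally identical, using the pair (\ref{equation 12}), (\ref{equation 44}) for $u_{2N}$ and the pair (\ref{equation 13}), (\ref{equation 45}) for $u_{3N}$; the overall factors $1/\delta$ and $1/(\delta\theta)$ are inherited from the matching initial data (\ref{equation 9}) and (\ref{equation 10}).

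\textbf{Main obstacle.} The substantive work will be clerical: tracking a large number of trigonometric terms while being certain that every discarded quantity is genuinely $O(1/N^2)$ uniformly in $x$. The only real analytic ingredient is the pair of oscillatory-integral estimates (\ref{equation 46}), whose justification will require integration by parts together with the smoothness granted by hypothesis a.) (boundedness of $q'$ and $\Delta''$) and the condition $\Delta'\le 1$ from b.); these are precisely what ensure that the boundary terms produced by the integration by parts at $0,h_1,h_2,\pi$ remain bounded and that the integrands surviving that step stay integrable across the jumps.
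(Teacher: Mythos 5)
Your proposal follows essentially the same route as the paper: the paper obtains (\ref{equation 48})--(\ref{equation 50}) by inserting the refined approximations (\ref{equation 43})--(\ref{equation 45}) into the integral equations (\ref{equation 11})--(\ref{equation 13}), disposing of the rapidly oscillating integrals via (\ref{equation 46}), and replacing $\mu$ by $\mu _{N}$ from (\ref{equation 47}) -- the ``long operations'' it leaves implicit are exactly the product-to-sum manipulations and the first-order expansion in $\delta _{N}$ that you spell out. Your account is correct and, if anything, more explicit than the paper's.
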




\end{document}